\numberwithin{equation}{section}
\theoremstyle{plain} 
\newtheorem{thm}[equation]{Theorem}
\newtheorem{cor}[equation]{Corollary}
\newtheorem{lem}[equation]{Lemma}
\newtheorem{prop}[equation]{Proposition}
\newtheorem{claim}{Claim}
\theoremstyle{definition}
\newtheorem{defn}[equation]{Definition}
\theoremstyle{remark}
\newtheorem{rem}[equation]{Remark}
\title[Pairs of disjoint primitives]{A classification of pairs of disjoint nonparallel primitives in the boundary of a genus two handlebody}
\author{John Berge}
\date{\today}                                          
\begin{document}

\begin{abstract}
Embeddings of pairs of disjoint nonparallel primitive simple closed curves in the boundary of a genus two handlebody are classified. Briefly, two disjoint primitives either lie on opposite ends of a product $F \boldsymbol{\times} I$, or they lie on opposite ends of a kind of ``twisted'' product $F \widetilde{\boldsymbol{\times}} I$, where $F$ is a once-punctured torus. 

If one of the curves is a proper power of a primitive, the situation is simpler. Either the curves lie on opposite sides of a separating disk in the handlebody, or they bound a nonseparating essential annulus in the handlebody. 
\end{abstract}

\maketitle

\section{Introduction}

Suppose $H$ is a genus two handlebody. A simple closed curve $\alpha$ in $\partial H$ is \emph{primitive} in $H$ if there exists a disk $D$ in $H$ such that $|\alpha \cap D| = 1$. Equivalently $\alpha$ is conjugate to a free generator of $\pi_1(H)$. A pair of disjoint properly embedded disks in $H$ is a \emph{complete set of cutting disks} of $H$ if cutting $H$ open along the pair of disks yields a 3-ball.
A pair of disjoint simple closed curves $(\alpha, \beta)$ in $\partial H$ is a \emph{primitive pair} if there is a complete set of cutting disks $\{D_A, D_B\}$ of $H$ such that $|\alpha \cap \partial D_A| = 1$, $|\alpha \cap \partial D_B| = 0$, $|\beta \cap \partial D_A| = 0$, and $|\beta \cap \partial D_B| = 1$. Equivalently, the pair $(\alpha, \beta)$ is conjugate to a pair of free generators of $\pi_1(H)$. A pair of disjoint nonparallel simple closed curves $(\alpha, \beta)$ in $\partial H$ is a \emph{pair of primitives} if both $\alpha$ and $\beta$ are primitive in $H$. (Note that a ``pair of primitives'' is not generally a ``primitive pair''.) A pair of nonseparating simple closed curves $(\alpha, \beta)$ in $\partial H$ is \emph{separated} in $H$ if there exists a separating disk $D$ embedded in $H$ such that $\alpha$ and $\beta$ lie on opposite sides of $\partial D$ in $\partial H$.

\section{Preliminaries}

This section recalls some of the basic properties of genus two Heegaard diagrams, their underlying graphs, and genus two R-R diagrams which will be helpful.

\subsection{Genus two Heegaard diagrams and their underlying graphs} \label{H-diagrams and underlying graphs}

Suppose $\alpha$ and $\beta$ are disjoint nonparallel simple closed curves in the boundary of a genus two handlebody $H$, neither $\alpha$ nor $\beta$ bound disks in $H$, and $\{D_A, D_B\}$ is a complete set of cutting disks of $H$. Cutting $H$ open along $D_A$ and $D_B$ cuts $\alpha$ and $\beta$ into sets of arcs $E(\alpha)$ and $E(\beta)$ respectively, and cuts $H$ into a 3-ball $W$. Then $\partial W$ contains disks $D_A^+$, $D_A^-$, $D_B^+$, and $D_B^-$ such that gluing $D_A^+$ to $D_A^-$ and $D_B^+$ to $D_B^-$ reconstitutes $\alpha$, $\beta$, and $H$. 

The sets of arcs $E(\alpha)$ and $E(\beta)$ form the edges of \emph{Heegaard diagrams} in $\partial W$ with ``fat'', i.e. disk rather than point, vertices $D_A^+$, $D_A^-$, $D_B^+$, and $D_B^-$. Let $HD_\alpha$, $HD_\beta$, and $HD_{\alpha, \beta}$ be the Heegaard diagrams in $\partial W$ whose edges are the arcs of $E(\alpha)$, $E(\beta)$, and $E(\alpha) \cup E(\beta)$ respectively. 

If one ignores how $D_A^+$ and $D_B^+$ are identified with $D_A^-$ and $D_B^-$ to reconstitute $H$, the sets of arcs $E(\alpha)$, $E(\beta)$, and $E(\alpha) \cup E(\beta)$ also form the edges of graphs in $\partial W$ with vertices $D_A^+$, $D_A^-$, $D_B^+$, and $D_B^-$. Let $G_\alpha$, $G_\beta$, and $G_{\alpha, \beta}$ denote the graphs in $\partial W$ whose edges are the arcs of $E(\alpha)$, $E(\beta)$, and $E(\alpha) \cup E(\beta)$ respectively. Then $G_\alpha$ is the graph \emph{underlying} $HD_\alpha$, and $G_\beta$ is the graph \emph{underlying} $HD_\beta$ etc..  Note that 
these graphs are not just abstract graphs, since they inherit specific embeddings in the 2-sphere $S^2 \cong \partial W$ from the Heegaard diagrams which they underlie.

\begin{rem}
The notation $HD_\alpha$, $HD_\beta$, and $HD_{\alpha, \beta}$, does not specify the set of cutting disks $\{D_A, D_B\}$. However, this shouldn't lead to ambiguity because context will make it clear which set of cutting disks of $H$ is playing the role of $\{D_A, D_B\}$.
\end{rem}

\begin{rem}
In figures of $G_\alpha$, $G_\beta$, or $G_{\alpha, \beta}$,  the disks $D_A^+$, $D_A^-$, $D_B^+$, and $D_B^-$ in $\partial W$ are smashed to points denoted by $A^+$, $A^-$, $B^+$, and $B^-$ respectively.
\end{rem}

\subsection{Genus two R-R diagrams} \label{Genus two R-R diagrams}

R-R diagrams are a type of planar diagram related to Heegaard diagrams. These diagrams were originally introduced by Osborne and Stevens in \cite{OS74}. They are particularly useful for describing embeddings of simple closed curves in the boundary of a handlebody so that the embedded curves represent certain conjugacy classes in $\pi_1$ of the handlebody. 

Here is a description of the basics of genus two R-R diagrams, which is all we need. 
Suppose $\Sigma$ is a closed orientable surface of genus two obtained by capping off the two boundary components of an annulus $\mathcal{A}$ with a pair of once-punctured tori $F_A$ and $F_B$, so that $\Sigma = \mathcal{A} \cup F_A \cup F_B$, $\mathcal{A} \cap F_A = \partial F_A$, and $\mathcal{A} \cap F_B = \partial F_B$. Following Zieschang \cite{Z88}, the separating simple closed curves $\partial F_A$ and $\partial F_B$ in $\Sigma$ are \emph{belt} curves, and $F_A$ and $F_B$ are \emph{handles}.

If $\mathcal{S}$ is a set of pairwise disjoint simple closed curves in $\Sigma$, then, after isotopy, we may assume each curve $\zeta \in \mathcal{S}$ is either disjoint from $\partial F_A \cup \partial F_B$, or $\zeta$ is cut by its intersections with $\partial F_A \cup F_B$ into arcs, each properly embedded and essential in one of $\mathcal{A}$, $F_A$, $F_B$. A properly embedded essential arc in $F_A$ or $F_B$ is a \emph{connection}. Two connections in $F_A$ or $F_B$ are \emph{parallel} if they are isotopic in $F_A$ or $F_B$ via an isotopy keeping their endpoints in $\partial F_A$ or $\partial F_B$. A collection of pairwise disjoint connections on a given handle can be partitioned into \emph{bands} of pairwise parallel connections. Since each handle is a once-punctured torus, there can be at most three nonparallel bands of connections on a given handle.

Note that sets of pairwise nonparallel connections in a once-punctured torus are unique up to homeomorphism. In particular, if $F$ and $F'$ are once-punctured tori, $\Delta = \{\delta_1, \cdots ,\delta_i \}$, is a set of pairwise nonparallel connections in $F$, and $\Delta' = \{\delta'_1, \cdots ,\delta'_i \}$, $1 \leq i \leq 3$ is a set of pairwise nonparallel connections in $F'$, then there is a homeomorphism $h \colon F \to F'$ which takes $\Delta$ to $\Delta'$. 

\begin{rem}
In practice, it is often inconvenient to have curves in $\mathcal{S}$ that lie completely in $F_A$ or $F_B$. This situation can be avoided by relaxing the supposition that each curve in $\mathcal{S}$ has only essential intersections with the belt curves $\partial F_A$ and $\partial F_B$. Then, if a curve $\zeta \in \mathcal{S}$ lies completely in $F_A$ or $F_B$, say $F_B$, $\zeta$ can be isotoped in $\Sigma$ so that $\zeta \cap F_B$ consists of one properly embedded essential arc, while $\zeta \cap \mathcal{A}$ is an inessential arc in $\mathcal{A}$, isotopic in $\mathcal{A}$ into $\partial F_B$, keeping its endpoints fixed.
(In the R-R diagrams of this paper, the curve $\beta$ is always displayed in this manner.)
\end{rem}

Some simplifications can be made at this point without losing any information about the embedding of the curves of $\mathcal{S}$ in $\Sigma$. For example, suppose $F$ is either $F_A$ or $F_B$, and let $\mathcal{S}_\mathcal{A}$ be the set of arcs in which curves of $\mathcal{S}$ intersect $\mathcal{A}$. Then each set of parallel connections on $F$ can be merged into a single connection. (This also merges some endpoints of arcs in $\mathcal{S}_\mathcal{A}$ meeting $\partial F$.) 

After such mergers, $F$ carries at most $3$ pairwise nonparallel connections.  Continuing, after each set of parallel connections on $F_A$ and $F_B$ has been merged, additional mergers of sets of properly embedded parallel subarcs of $\mathcal{S}_\mathcal{A}$ can also be made; although now whenever, say, $n$ parallel arcs are merged into one, this needs to be recorded by placing the integer $n$ near the single arc resulting from the merger.

Merging parallel connections in $F_A$ and $F_B$ turns the set of pairwise disjoint simple closed curves in $\mathcal{S}$ into a graph $\mathcal{G}$ in $\Sigma$ whose vertices are the endpoints of the remaining connections in $F_A$ and $F_B$. Clearly $\mathcal{G}$ and its embedding in $\Sigma$ completely encodes the embedding of the curves of $\mathcal{S}$ in $\Sigma$. 

Understanding $\mathcal{G}$ and the curves of $\mathcal{S}$ it represents is complicated by the fact that $\mathcal{G}$ is usually nonplanar. However, $\mathcal{G}$ always has a nice immersion $I(\mathcal{G})$ in the plane $\mathbb{R}^2$, which we now describe. It is this immersion of $\mathcal{G}$ in $\mathbb{R}^2$ which becomes an R-R diagram of the curves of $\mathcal{S}$ in $\Sigma$.

To produce $I(\mathcal{G})$, first remove a small disk $D$, disjoint from $\mathcal{G}$, from the interior of $\mathcal{A}$. Then embed $\mathcal{A} - D$  in $\mathbb{R}^2$ so that $\partial F_A$ and $\partial F_B$ bound disjoint round disks, say $\mathcal{F}_A$ and $\mathcal{F}_B$ respectively, in $\mathbb{R}^2$. 

Next, note that if $\delta$ and $\delta'$ are nonparallel connections on a handle $F_X$, with $X \in \{A,B\}$, the endpoints of $\delta$ separate the endpoints of $\delta'$ in the belt curve $\partial F_X$. It follows that, if $u$ and $v$ in $\partial F_X$ are the endpoints of a connection $\delta$ in $F_X$, we may assume $u$ and $v$ bound a diameter $d_\delta$ of the disk $\mathcal{F}_X$, and then $\delta$ can be embedded in $\mathcal{F}_X$ as the diameter $d_\delta$ of $\mathcal{F}_X$. This results in each round disk $\mathcal{F}_X$ containing $0$, $1$, $2$, or $3$ diameters passing through its center $X$, with each diameter an image of a connection in $F_X$, where the number of such diameters depends upon whether $F_X$ originally contained respectively $0$, $1$, $2$, or $3$ bands of parallel connections.

Also note that the immersion $I(\mathcal{G})$ still encodes the embedding of the curves in $\mathcal{S}$ in $\Sigma$ up to homeomorphism. This follows from the aforementioned fact that if $\Delta = \{\delta_1, \cdots ,\delta_i \}$ and $\Delta' = \{\delta'_1, \cdots ,\delta'_i \}$, $1 \leq i \leq 3$ are each sets of pairwise nonparallel connections in a once-punctured torus $F$, then there is a homeomorphism of $F$ which takes $\Delta$ to $\Delta'$.

The partition of $\Sigma$ into $\mathcal{A}$, $F_A$ and $F_B$ makes it easy to describe infinite families of parametrized embeddings of the curves of $\mathcal{S}$ in the boundary of a genus two handlebody. To do this, consider $\Sigma$ as 2-sided, with sides $\Sigma^+$ and $\Sigma^-$, and suppose the curves of $\mathcal{S}$ lie in $\Sigma^+$. Then gluing a pair of disks $D_A$ and $D_B$ to $\Sigma^-$ so that $\partial D_A$ is glued to a nonseparating simple closed curve in $F_A^-$, $\partial D_B$ is glued to a nonseparating simple closed curve in $F_B^-$, and the resulting 2-sphere boundary component is capped off with a 3-ball, makes $\Sigma$ the boundary of a genus two handlebody $H$.

Continuing, note that if $\delta$ and $\delta'$ are two nonparallel oriented connections in a once-punctured torus $F$, then the isotopy class of an oriented nonseparating simple closed curve $\gamma$ in $F$ is determined by its algebraic intersection numbers with $\delta$ and $\delta'$. This makes it possible to parametrize the isotopy classes of attaching curves of $\partial D_A$ and $\partial D_B$ in $F_A$ and $F_B$ by adding integer labels to the endpoints of the diameters of $\mathcal{F}_A$ and $\mathcal{F}_B$ which represent connections in $F_A$ and $F_B$.  (There are minor restrictions on the values of these parameters; all related to similar restrictions on meridional and longitudinal coordinates of simple closed curves in $H_1(\partial V)$, where $V$ is a solid torus.  Figure \ref{DisPCFig6a} illustrates these.)

\begin{figure}[htbp]
\centering
\includegraphics[width = 1.0 \textwidth]{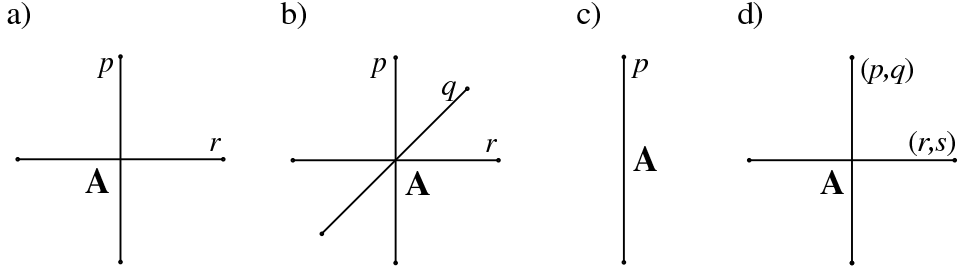}
\caption{There is a simple closed curve in the once-punctured torus $F_A$ which intersects the bands of connections in Figures \ref{DisPCFig6a}a, \ref{DisPCFig6a}b, and \ref{DisPCFig6a}c with the indicated intersection numbers if and only if: A) $\gcd(p,r) = 1$ in Figure \ref{DisPCFig6a}a. B) $\gcd(p,r) = 1$ and $q = p + r$ in Figure \ref{DisPCFig6a}b. C) $p \in \mathbb{Z}$ in Figure \ref{DisPCFig6a}c.  Figure \ref{DisPCFig6a}d shows a variant labeling, useful when we wish to think of $F_A$ as carrying a meridional and longitudinal pair of simple closed curves, say $m$ and $l$, meeting transversely at a single point. In this case, if $\delta$ is a connection with label $(p,q)$ in $F_A$, so $[\delta] = p[l] + q[m]$ in $H_1(F_A,\partial F_A)$, then $ps-rq = \pm 1$.}
\label{DisPCFig6a}
\end{figure}

\begin{rem}
In practice, given a set of curves $\mathcal{S}$ in the boundary of a genus two handlebody $H$, we usually reverse the order in which the partition of $\partial H$ into $\mathcal{A}$, $F_A$, and $F_B$ and a complete set of cutting disks $\{D_A, D_B\}$ are chosen by choosing a desired set of cutting disks $\{D_A, D_B\}$ first, and then choosing an appropriate compatible partition of $\partial H$ into $\mathcal{A}$, $F_A$, and $F_B$ with $\partial D_A \subset F_A$ and $\partial D_B \subset F_B$.
\end{rem}

\section{The classification}

\begin{thm} \label{3 case theorem}
Suppose $H$ is a genus two handlebody, and $\alpha$ and $\beta$ are a pair of disjoint nonparallel simple closed curves in $\partial H$ such that both $\alpha$ and $\beta$ are primitive in $H$. Then $\alpha$ and $\beta$ have an R-R diagram with the form of Figure \emph{\ref{DisPCFig1a}}, \emph{\ref{DisPCFig2a}} or \emph{\ref{DisPCFig3a}}.
\end{thm}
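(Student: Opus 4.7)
The approach is to recast the problem as a finite combinatorial classification of R-R diagrams of $\alpha\cup\beta$, using primitivity of each curve to cut the possibilities down to the three stated ones.

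First I would use the primitivity of $\alpha$ to fix a cutting disk $D_A$ of $H$ with $|\alpha \cap \partial D_A|=1$, and extend it to a complete cutting system $\{D_A,D_B\}$ chosen to minimize $|\beta \cap \partial D_B|$, and secondarily $|\beta \cap \partial D_A|$, over all such completions. This minimality ensures the Heegaard diagram $HD_{\alpha,\beta}$ admits no waves along $\partial D_B$, i.e.\ no reducing arc of $\beta$ that would let us replace $D_B$ by a disjoint disk with fewer intersections. Next I would choose a compatible partition $\partial H = \mathcal{A}\cup F_A \cup F_B$ with $\partial D_A \subset F_A$ and $\partial D_B \subset F_B$, isotope $\alpha,\beta$ so they meet the belt curves minimally, and merge parallel connections on each handle to obtain an R-R diagram. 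Since a once-punctured torus supports at most three pairwise nonparallel bands, there are only finitely many qualitative band patterns for $(F_A,F_B)$ to consider.

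The core of the proof is then a case analysis over these band patterns. For each pattern I would impose the two primitivity constraints: there must exist an essential disk in $H$ meeting $\alpha$ once, and another meeting $\beta$ once. Translating these into the integer labels of the R-R diagram via the gcd/linear conditions of Figure \ref{DisPCFig6a} rules out most configurations directly. Many others produce a primitive disk for $\alpha$ disjoint from $\beta$, which either coincides with $D_B$ (so $\{\alpha,\beta\}$ has the separating-disk form of Figure \ref{DisPCFig1a}), or else contradicts minimality. The remaining patterns, where $\alpha$ and $\beta$ genuinely share all three bands on a common handle, fall into two normal forms depending on how the annular arcs of $\mathcal{A}$ glue the handles together: the untwisted $F\boldsymbol\times I$ structure of Figure \ref{DisPCFig2a} and the twisted $F\widetilde{\boldsymbol\times}I$ structure of Figure \ref{DisPCFig3a}. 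Finally I would verify that each of the three normal forms does realize two disjoint nonparallel primitives, so no cases collapse.

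The main obstacle is the case analysis itself: enumerating band assignments on $F_A$ and $F_B$, tracking how the annular arcs in $\mathcal{A}$ reassemble connections into simple closed curves (which forces $\gcd$ and parity restrictions on the labels), and consistently invoking the wave-minimality of $D_B$ to discard diagrams that ought to reduce. Showing that the Diophantine constraints coming from the two primitivity conditions are simultaneously satisfiable in exactly the three stated families — and in no others — is where the argument will concentrate its effort.
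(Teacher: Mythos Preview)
Your plan has the right spirit, but both the normalization and the key primitivity tool diverge from what makes the argument close, and I do not see how your version finishes without something equivalent to what is missing.

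First, the setup. The paper does \emph{not} fix a disk dual to $\alpha$; it takes the \emph{unique} cutting disk $D_A$ disjoint from $\beta$, and then among the infinitely many $D_B$ with $|\beta\cap\partial D_B|=1$ minimizes $|\alpha\cap\partial D_B|$. This makes $\beta$ completely standard in the R-R diagram---a single connection on $F_B$, nothing on $F_A$---so the whole classification collapses to a trichotomy on the single integer $|\alpha\cap\partial D_B|\in\{0,1,>1\}$, and Lemma~\ref{Case 3 Heegaard diagram} pins down $G_{\alpha,\beta}$ in the last case. Your setup (fix some $D_A$ with $|\alpha\cap\partial D_A|=1$, then minimize $\beta$'s intersections over completions) leaves $\beta$ uncontrolled on both handles; note also that with $D_A$ fixed, $|\beta\cap\partial D_A|$ is fixed, so your ``secondary'' minimization is vacuous. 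The resulting ``enumerate all band patterns on both handles'' program is genuinely open-ended rather than a short finite list.

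Second, and more seriously, the primitivity constraint you invoke is not the one that carries the hard case. The $\gcd$/linear conditions of Figure~\ref{DisPCFig6a} parametrize nonseparating simple closed curves \emph{inside a single handle}; they tell you which $\partial D_A\subset F_A$ exist, not whether a curve in $\partial H$ is primitive in $\pi_1(H)$. A disk meeting $\beta$ once need not have boundary confined to one handle, so ``translating primitivity into the integer labels via Figure~\ref{DisPCFig6a}'' is not a well-defined step. In the paper, once $|\alpha\cap\partial D_B|>1$, the decisive input is the Cohen--Metzler--Zimmermann characterization of primitives in $F(A,B)$ (Theorem~\ref{recognizing primitives}): it forces the cyclic word $A^{m_1}B\cdots A^{m_j}B$ represented by $\alpha$ to have $\{m_i\}=\{e,e+1\}$, and combined with $c\ge a+b$ from Lemma~\ref{Case 3 Heegaard diagram} it forces $e>1$. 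That is exactly what produces the two $F_A$-bands labeled $p$ and $p+\epsilon$ with $\min\{p,p+\epsilon\}>1$ in Figure~\ref{DisPCFig3a}. Without CMZ or an equivalent, your case analysis has no mechanism to exclude, e.g., three nonparallel bands of $\alpha$ on $F_A$ with arbitrary labels.

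A minor side remark: you have the product/twisted-product identification reversed. Figure~\ref{DisPCFig3a} is the untwisted $F\boldsymbol{\times} I$ case, while Figure~\ref{DisPCFig2a} is generically the twisted one; see Lemma~\ref{Commutator invariance} and the results immediately following it.
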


\begin{proof}
Recall that if $\gamma$ is a primitive simple closed curve in the boundary of a genus two handlebody $H$, then there is a unique cutting disk of $H$ (up to isotopy) disjoint from $\gamma$, while there are an infinite number of cutting disks of $H$ that intersect $\gamma$ transversely exactly once. Then given $(\alpha,\beta)$, let $\{D_A, D_B\}$ be a complete set of cutting disks  of $H$ such that $|\beta \cap \partial D_A| = 0$, and such that $\partial D_B$ intersects $\alpha$ minimally subject to $|\beta \cap \partial D_B | = 1$. 
Now the goal is to show that if $|\alpha \cap \partial D_B| = 0$, $|\alpha \cap \partial D_B| = 1$, or $|\alpha \cap \partial D_B| > 1$, then $\alpha$ and $\beta$ have an R-R diagram with the form of Figure \ref{DisPCFig1a}, \ref{DisPCFig2a} or \ref{DisPCFig3a} respectively.

Consider the first possibility $|\alpha \cap \partial D_B| = 0$. In this case, since $\alpha$ is primitive in $H$, we must have $|\alpha \cap \partial D_A| = 1$, and then the pair of disjoint primitives $(\alpha, \beta)$ is also a pair of primitives in $\pi_1(H)$. It follows that the pair $(\alpha, \beta)$ has an R-R diagram with the form of Figure \ref{DisPCFig1a}.

Turning to the second possibility, suppose $|\alpha \cap \partial D_B| = 1$. In this case, let $C$ be a separating simple closed curve in $\partial H$, disjoint from $\partial D_A$, $\partial D_B$, and $\beta$, such that $C$ separates $\partial D_A$ and $\partial D_B$. Then let $\mathcal{A}$ be a small annular regular neighborhood of $C$ in $\partial H$, chosen so that $\mathcal{A}$ is also disjoint from $\partial D_A$, $\partial D_B$, and $\beta$. Then $\partial H - int(\mathcal{A})$ is the union of two once-punctured tori $F_A$ and $F_B$, with $\partial D_A \subset F_A$ and $\partial D_B \subset F_B$, and we may suppose that $\alpha$ has only essential intersections with $\mathcal{A}$, $F_A$ and $F_B$, as well as $\partial D_A$ and $\partial D_B$.
This partition of $\partial H$ into $\mathcal{A} \cup F_A \cup F_B$ provides a natural  framework of the sort described in Subsection \ref{Genus two R-R diagrams}, which leads to an R-R diagram describing how $\alpha$, $\beta$, $\partial D_A$, and $\partial D_B$, are configured in $\partial H$. 

In this case, this is quite easy. Since $\alpha$ and $\beta$ are disjoint, while $|\alpha \cap \partial D_B| = 1$, $\alpha \cap F_B$ must consist of a single connection. Then $\alpha \cap F_A$ must also consist of exactly one connection, which can be any connection in $F_A$. It follows that the pair $(\alpha, \beta)$ has an R-R diagram with the form of Figure \ref{DisPCFig2a}.

Finally, consider the last possibility $|\alpha \cap \partial D_B| > 1$. In this case, Lemma \ref{Case 3 Heegaard diagram} lays the foundation for the analysis by showing that the graph $G_{\alpha, \beta}$ underlying the  Heegaard diagram $HD_{\alpha, \beta}$ has the form of Figure \ref{DisPCFig5c}.

The same partition of $\partial H$ into the union of $\mathcal{A}$, $F_A$ and $F_B$, can be used to obtain an R-R diagram of $\alpha$ and $\beta$ on $\partial H$ as was used in the previous case. The R-R diagram will differ of course, because now $\alpha$ has more than one connection in each of $F_A$, $F_B$. We can determine what these connections of $\alpha$ can be by looking at the cyclic word which $\alpha$ represents in $\pi_1(H)$.

Theorem \ref{recognizing primitives} below, which is the main result of \cite{CMZ81}, shows that, if $\alpha$ is primitive in $H$, then the cyclic word which $\alpha$ represents in $\pi_1(H)$ must have a particular form, and this provides what we need. To use Theorem \ref{recognizing primitives}, let $A$ and $B$ be generators of $\pi_1(H)$ chosen so that $A$ and $B$ are represented by simple closed curves in $H$ dual to $D_A$ and $D_B$ respectively. Then, since $\alpha$ is primitive in $H$, and $G_{\alpha,\beta}$ has the form of Figure \ref{DisPCFig5c} with $|\alpha \cap \partial D_B| > 1$, Theorem \ref{recognizing primitives} implies $\alpha$ represents a cyclic word in $\pi_1(H)$ of the form $w = A^{m_1}B \dots A^{m_j}B$, with $\{m_1, \dots ,m_j\}$ = $\{e, e+1\}$ and $j > 1$. In addition, since $j > 1$ and $\alpha$ is a primitive rather than a proper power of a primitive in $H$, both $e$ and $e+1$ must appear as exponents of $A$ in $W$.

It follows that the A-handle of an R-R diagram $\mathcal{D}$ describing the embedding of $\alpha$ and $\beta$ in $\partial H$ must have exactly two nonparallel types of connections bearing labels $p$ and $p+\epsilon$, where $\epsilon = \pm 1$, and $\{p, p+ \epsilon\}$ = $\{e,e+1\}$.

Next, let $a = |A^pB|$ and $b = |A^{p+\epsilon}B|$ be the number of subwords of the form $A^pB$ and $A^{p+\epsilon}B$ respectively in $w$. Then $a + b = |\alpha \cap \partial D_B| > 1$. And, since $c \geq a + b$ in Figure \ref{DisPCFig5c}, the set of exponents $\{m_1, \dots ,m_j\}$ of $A$ in $w$ must satisfy $\{m_1, \dots ,m_j\}$ = $\{e, e+1\}$ with $e > 1$. So $\min\{p, p+\epsilon\} > 1$ in $\mathcal{D}$. Finally, since $\{A^pB, A^{p+\epsilon}B\}$ is a set of free generators of $\pi_1(H)$, $\gcd(a,b) = 1$.

Then, putting this all together, it follows readily that the pair $(\alpha, \beta)$ has an R-R diagram with the form of Figure~\ref{DisPCFig3a}.

\end{proof}

\begin{lem}\label{Case 3 Heegaard diagram}
Suppose $\alpha$ and $\beta$ are disjoint nonparallel primitive simple closed curves in the boundary of a genus two handlebody $H$, and $\{D_A, D_B\}$ is a complete set of cutting disks of $H$  such that $|\beta \cap \partial D_A| = 0$, $|\beta \cap \partial D_B| = 1$, and $|\alpha \cap \partial D_B| > 1$, where $|\alpha \cap \partial D_B|$ is minimal subject to $|\beta \cap \partial D_B| = 1$.

Then the graph $G_{\alpha, \beta}$ underlying the Heegaard diagram $HD_{\alpha, \beta}$ of $\alpha$ and $\beta$ with respect to $\{D_A, D_B\}$ has the form of Figure \emph{\ref{DisPCFig5c}} with $c \geq a + b > 1$.
\end{lem}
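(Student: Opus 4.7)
My plan is to analyze $G_{\alpha, \beta}$ in two stages: first establishing the general combinatorial shape of the graph (the presence of precisely three bundles of parallel edges plus the $\beta$-edge), then deriving the numerical inequality $c \geq a + b$.

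For the shape, I would start from the observation that since $|\beta \cap \partial D_A| = 0$ and $|\beta \cap \partial D_B| = 1$, the curve $\beta$ contributes a single edge from $B^+$ to $B^-$, and all remaining edges of $G_{\alpha,\beta}$ come from arcs of $\alpha$. The central step is a wave/band-sum argument to eliminate any arc of $\alpha$ having both endpoints on $B^+$ (or both on $B^-$). If such a wave $w$ existed, its endpoints would partition $\partial D_B$ into two subarcs exactly one of which contains the unique point $\beta \cap \partial D_B$. Band-summing $\partial D_B$ to itself along $w$ using the $\beta$-free subarc produces a simple closed curve $\partial D_B'$ bounding a disk $D_B'$ in $H$; since $w$ is disjoint from $\partial D_A$, the new pair $\{D_A, D_B'\}$ is still a complete cutting system, and a direct count gives $|\beta \cap \partial D_B'| = 1$ together with $|\alpha \cap \partial D_B'| < |\alpha \cap \partial D_B|$, contradicting the minimality hypothesis. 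An analogous but more delicate argument, relying on primitivity of $\alpha$ in place of a minimality hypothesis, is needed to rule out $\alpha$-loops at $A^+$ and $A^-$. After excluding these possibilities, the surviving arcs of $\alpha$ run between distinct pairs of vertices; grouping parallel edges and invoking the planar embedding of $G_{\alpha,\beta}$ in $\partial W \cong S^2$ leaves the three bundles of sizes $a$, $b$, $c$ displayed in Figure \ref{DisPCFig5c}, with $a + b = |\alpha \cap \partial D_B| > 1$ by construction.

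The inequality $c \geq a + b$ is the second, and arguably trickier, obstacle. I would again appeal to the planarity of the embedding: the $c$ parallel edges of the $A^+A^-$ bundle separate $S^2 \setminus G_{\alpha,\beta}$ into regions through which the $a + b$ crossover edges must thread. If $c < a + b$, then a pigeonhole argument on the complementary regions produces either a non-simple configuration for $\alpha$ or an unrecognized $\alpha$-wave at $B^\pm$ of the kind excluded above. Equivalently, since the $A^+A^-$ edges correspond to ``internal'' letters of the cyclic word $\alpha$ represents in $\pi_1(H) = \langle A, B \rangle$ while the crossovers correspond to the letters $B$, the inequality is expressing that each occurrence of $B$ is flanked by at least two letters $A$, which is forced once $\alpha$ is primitive and $B^\pm$-waves have been excluded.

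The main obstacle is making the wave/band-sum step genuinely rigorous: one must verify that the replacement disk $D_B'$ is essential (not boundary-parallel), that it can be realized disjoint from $D_A$, and that the reduction strictly decreases $|\alpha \cap \partial D_B|$ (picking the correct side of the band sum is essential here). The analogue of the argument at $A^\pm$, which has no minimality crutch and must instead be driven by primitivity of $\alpha$, is the most delicate piece; I expect this to require either an appeal to the Whitehead graph criterion or an \emph{ad hoc} analysis of how such a loop would force $\alpha$ to be non-simple or non-primitive.
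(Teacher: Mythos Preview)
Your plan attacks the wrong targets and misses the mechanism the paper actually uses.

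First, the ``waves'' you spend most of your effort on --- arcs of $\alpha$ with both endpoints on the same vertex $A^\pm$ or $B^\pm$ --- are a non-issue. An arc with both endpoints on $D_B^+$ corresponds to a subword $BB^{-1}$ (or $B^{-1}B$) in the cyclic word for $\alpha$; once intersections are essential the word is cyclically reduced and no such loops occur. So your main band-sum move (surgering $D_B$ along a loop) and your ``delicate'' primitivity argument at $A^\pm$ are aimed at configurations that are already excluded.

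The real obstruction is to rule out certain edges \emph{between distinct vertices}: there are six possible types of $\alpha$-edges in $G_\alpha$, and you need to show only three survive. Planarity alone does not do this ($K_4$ is planar), so your sentence ``invoking the planar embedding \ldots leaves the three bundles'' is the actual gap. The paper's key observation is that primitivity of $\alpha$ forces $G_\alpha$ to be disconnected or to have a cut vertex (there is a cutting disk of $H$ missing $\alpha$). From this one sees that $A^+$ cannot be joined to both $B^+$ and $B^-$ (else $G_\alpha$ is connected without cut vertex), and once the $A^\pm B^\mp$ edges and the $A^+A^-$ edges are present, the cut-vertex condition forbids any $\alpha$-edge from $B^+$ to $B^-$. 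That is how the three bundles arise. Minimality is then used, via a band-sum of $D_A$ with $D_B$ along an $\alpha$-arc (not a surgery of $D_B$ along a wave), first to force the $A^+A^-$ edges to exist at all, and second to prove $c \geq a+b$. Your pigeonhole/word-theoretic sketch for the inequality is in the right spirit but is replaced in the paper by this single clean band-sum reduction.
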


\begin{proof}
Since $\alpha$ is primitive in $H$, there is a cutting disk $D$ of $H$ disjoint from $\alpha$. This implies the subgraph $G_\alpha$ of $G_{\alpha, \beta}$ is either not connected, or has a cut vertex. We will show that the only way either of these alternatives can hold is if $G_{\alpha, \beta}$ has the form of Figure \ref{DisPCFig5c}.

First, note that if there are no edges of $HD_\alpha$ connecting $D_A^+$ to either $D_B^+$ or $D_B^-$, then $|\alpha \cap \partial D_B| \leq 1$, contrary to hypothesis. So there must be edges of $G_\alpha$ connecting $A^+$ to either $B^+$ or $B^-$. However, if there are edges of $G_\alpha$ connecting $A^+$ to both $B^+$ and $B^-$, then $G_\alpha$ is connected and has no cut vertex. So, up to swapping $A^+$ and $A^-$ or $B^+$ and $B^-$, we may assume $A^+$ is connected to $B^-$, and $A^+$ is not connected to $B^+$ in $G_\alpha$.

Continuing, if edges of $HD_\alpha$ only connect $D_A^+$ to $D_B^-$, and $D_A^-$ to $D_B^+$, then the bandsum of $D_A$ and $D_B$ along one of these edges is a cutting disk $D_B'$ of $H$ such that $\{D_A, D_B'\}$ is a complete set of cutting disks of $H$, $|\beta \cap \partial D_B'| = 1$, and $|\alpha \cap \partial D_B'| < |\alpha \cap \partial D_B|$. This contradicts the assumed minimality of $|\alpha \cap \partial D_B|$.

It follows that there are edges of $G_\alpha$ connecting $A^+$ to $A^-$. And then, since $G_\alpha$ must have a cut vertex, no edges of $G_\alpha$ connect $B^+$ to $B^-$. 

Finally, if $c < a + b$ in Figure \ref{DisPCFig5c}, then, as before, the bandsum of $D_A$ and $D_B$ along an edge of $HD_\alpha$ connecting $D_A^+$ to $D_B^-$ is a cutting disk $D_B'$ of $H$ such that $\{D_A, D_B'\}$ is a complete set of cutting disks of $H$, $|\beta \cap \partial D_B'| = 1$, and $|\alpha \cap \partial D_B'| < |\alpha \cap \partial D_B|$. This again contradicts the assumed minimality of $|\alpha \cap \partial D_B|$. It follows that $G_{\alpha,\beta}$ has the form shown in Figure \ref{DisPCFig5c}.
\end{proof}

\subsection{Recognizing primitives}

The following result of Cohen, Metzler, and Zimmerman makes it possible to determine if a cyclically reduced word in a free group of rank two is primitive. 

\begin{thm}[CMZ81]
\label{recognizing primitives}
Suppose a cyclic conjugate of 
\[w = A^{m_1}B^{n_1} \dots A^{m_j}B^{n_j}\]
 is a member of a basis of $F(A,B)$, where $j \geq 1$ and each indicated exponent is nonzero. Then, after perhaps replacing $A$ by $A^{-1}$ or $B$ by $B^{-1}$, there exists $e > 0$ such that:
\[
m_1 = \dots = m_j = 1,
\quad
\text{and}
\quad
\{n_1, \dots ,n_j\} = \{e, e+1\},
\]
or
\[
\{m_1, \dots ,m_j\} = \{e, e+1\},
\quad
\text{and}
\quad
n_1 = \dots = n_j = 1.
\]
\end{thm}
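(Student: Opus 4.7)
The plan is to induct on the cyclically reduced length of $w$, using the characterization of primitive elements as the $\operatorname{Aut}(F(A,B))$-orbit of $A$ together with Whitehead's theorem. By Whitehead's theorem, any primitive of cyclic length greater than one admits a Whitehead automorphism that strictly reduces its cyclic length, so iterated reduction terminates at a cyclic conjugate of a generator. Running this reduction in reverse, every primitive arises from a generator by a sequence of length-increasing Whitehead moves, and it therefore suffices to show that the form in the conclusion is preserved under such moves, up to swapping $A \leftrightarrow B$, inverting a generator, or shifting $e$ by $\pm 1$.

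Next I would enumerate the Whitehead automorphisms of $F(A,B)$. Up to the obvious symmetries they reduce to the Nielsen moves $A \mapsto AB^{\pm 1}$ and $B \mapsto A^{\pm 1}B$, and I would check each case by direct computation. For example, applied to a form-(2) word $w' = A^{m_1}B \cdots A^{m_j}B$ with $m_i \in \{e, e+1\}$, the move $B \mapsto AB$ shifts every $m_i$ up by one, yielding another form-(2) word with new parameter $e+1$. The move $A \mapsto AB$ sends each block $A^{m_i}B$ to $(AB)^{m_i}B = ABAB\cdots ABB$; concatenating these images and reading the result as $A$-$B$ syllables produces a form-(1) word with every $A$-exponent equal to $1$ and $B$-exponents in $\{1, 2\}$. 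Symmetric computations handle the remaining Nielsen moves and sign variants, and the base case $j = 1$ reduces to the obvious fact that $A^m B^{\pm 1}$ and $A^{\pm 1} B^n$ are primitive.

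The main obstacle is the combinatorial bookkeeping: one must check that no cancellation at the seam between consecutive Nielsen images introduces a third exponent value outside $\{e, e+1\}$ or collapses a block to exponent zero. The underlying reason this works is a pigeonhole-plus-mediant phenomenon: because the $m_i$ (or $n_i$) take only two consecutive integer values, the Nielsen images decompose into blocks whose lengths also differ by at most one, and the seams contribute only predictable additions of $\pm 1$. As a conceptual sanity check, primitive conjugacy classes correspond bijectively to pairs $(p, q) \in \mathbb{Z}^2$ with $\gcd(p, q) = 1$ via exponent sums, and the form described in the conclusion matches the \emph{mechanical} or Christoffel-word representative of slope $p/q$, which is precisely the normal form produced by the Stern--Brocot mediant structure built into the Whitehead reduction.
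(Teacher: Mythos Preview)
The paper does not give a proof of this theorem: it is quoted from \cite{CMZ81} and invoked as a black box in the proof of Theorem~\ref{3 case theorem}. There is therefore no argument in the paper to compare your proposal against.

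That said, your sketch is a reasonable outline of one standard proof. The logic---Whitehead's theorem supplies a strictly length-decreasing chain of Whitehead automorphisms from any primitive down to a generator, so it suffices to check that every length-\emph{increasing} Whitehead move takes a word of the asserted shape to another such word---is sound, and the two sample computations you give are correct. What remains, as you acknowledge, is the complete case analysis over all elementary Nielsen moves (and their sign and left/right variants) applied to both forms; this is mechanical but needs care, since some moves (for instance $B\mapsto A^{-1}B$ on a form-(1) word) introduce negative exponents that must then be absorbed by the permitted inversion of a generator. One minor point: your ``base case $j=1$'' is phrased as a primitivity assertion, but in the length induction the base is really a single generator (cyclic length~$1$), which vacuously has the required form; the $j=1$ case is then the first nontrivial instance rather than the base.
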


Note that if $w$ in $F(A,B)$ has the form $w = AB^{n_1} \dots AB^{n_j}$, say, with $j \geq 1$ and $\{n_1, \dots ,n_j\} = \{e, e+1\}$, then the automorphism $A \mapsto AB^{-e}$ of $F(A,B)$ reduces the length of $w$, so repeated applications of such automorphisms can be used to determine if a given word $w$ in $F(A,B)$ is a primitive.

\begin{figure}[htbp]
\centering
\includegraphics[width = 0.55\textwidth]{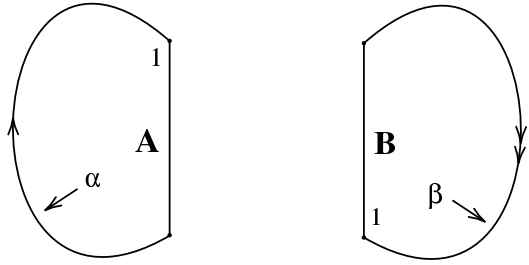}
\caption{If $H$ is a genus two handlebody, $(\alpha, \beta)$ is a pair of disjoint nonparallel primitives in $\partial H$, and $\{D_A, D_B\}$ is a complete set of cutting disks of $H$ such that $|\beta \cap \partial D_A| = 0$, $|\beta \cap \partial D_B| = 1$, and $|\alpha \cap \partial D_B| = 0$, then $\alpha$ and $\beta$ have an R-R diagram with the form of this figure. Here $(\alpha, \beta)$ represents $(A,B)$ in $\pi_1(H)$.}
\label{DisPCFig1a}
\end{figure}

\begin{figure}[htbp]
\centering
\includegraphics[width = 0.7\textwidth]{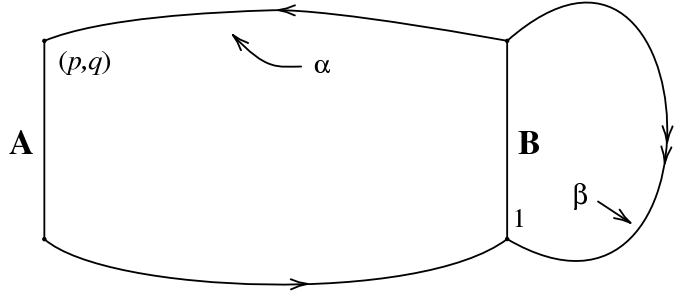}
\caption{If $H$ is a genus two handlebody, $(\alpha, \beta)$ is a pair of disjoint nonparallel primitives in $\partial H$, and $\{D_A, D_B\}$ is a complete set of cutting disks of $H$ such that $|\beta \cap \partial D_A| = 0$, $|\beta \cap \partial D_B| = 1$, and $|\alpha \cap \partial D_B| = 1$, then $\alpha$ and $\beta$ have an R-R diagram with the form shown in this figure. Here the parameters $p$ and $q$ are intersection numbers of the connection $\alpha \cap F_A$ with $\partial D_A$ and a simple closed longitudinal curve $l$ on $F_A$ with $|l \cap \partial D_A| = 1$. Then $p \in \mathbb{Z}$, $\gcd(p,q) = 1$, and $(\alpha, \beta)$ represents $(A^p,B)$ in $\pi_1(H)$.}
\label{DisPCFig2a}
\end{figure}

\begin{figure}[htbp]
\centering
\includegraphics[width = 0.8\textwidth]{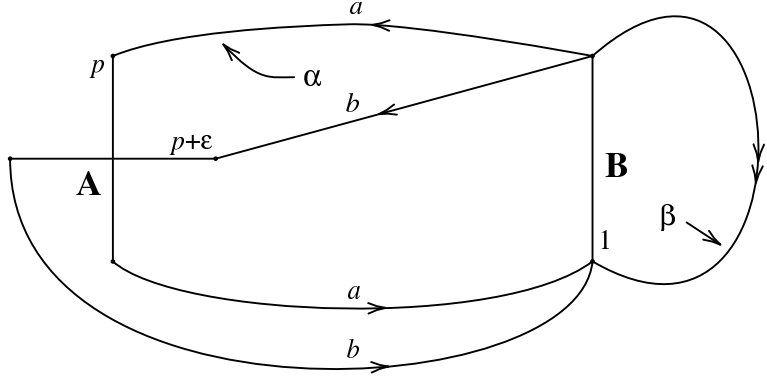}
\caption{If $H$ is a genus two handlebody, $(\alpha, \beta)$ is a pair of disjoint nonparallel primitives in $\partial H$, and $\{D_A, D_B\}$ is a complete set of cutting disks of $H$ with $|\beta \cap \partial D_A| = 0$, $|\beta \cap \partial D_B| = 1$, and $|\alpha \cap \partial D_B| = s > 1$, with $s$ minimal subject to $|\beta \cap \partial D_B| = 1$, then $\alpha$ and $\beta$ have an R-R diagram with the form shown in this figure with parameters which satisfy $\gcd(a,b) = 1$, $a + b > 1$, $\epsilon = \pm1$, and $\text{min}\{p, p+\epsilon\} > 1$.}
\label{DisPCFig3a}
\end{figure}

\begin{figure}[htbp]
\centering
\includegraphics[width = 0.35\textwidth]{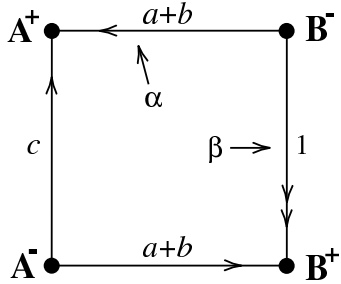}
\caption{Lemma \ref{Case 3 Heegaard diagram} shows that if $|\alpha \cap \partial D_B| > 1$ in Theorem \ref{3 case theorem}, then, up to swapping $A^+$ and $A^-$ or $B^+$ and $B^-$, the graph $G_{\alpha, \beta}$ of the  Heegaard diagram $HD_{\alpha, \beta}$ has the form shown here with $c \geq a + b$ and $a + b = |\alpha \cap \partial D_B|$.}
\label{DisPCFig5c}
\end{figure}

\begin{figure}[htbp]
\centering
\includegraphics[width = 0.8\textwidth]{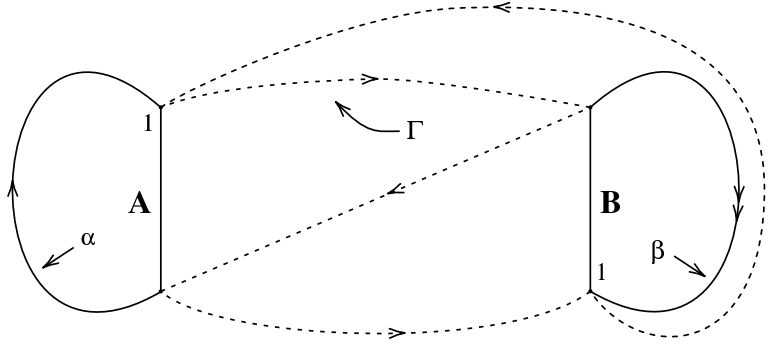}
\caption{This figure shows the pair of disjoint primitives $(\alpha, \beta)$ of Figure \ref{DisPCFig1a} separated by a simple closed curve $\Gamma$ such that $\Gamma$ represents $ABA^{-1}B^{-1}$ in $\pi_1(H)$. (The existence of $\Gamma$ shows that in addition to being separated and a Type I pair, $(\alpha, \beta)$ is also a Type II pair.)}
\label{DisPCFig1b}
\end{figure}

\begin{figure}[htbp]
\centering
\includegraphics[width = 0.85\textwidth]{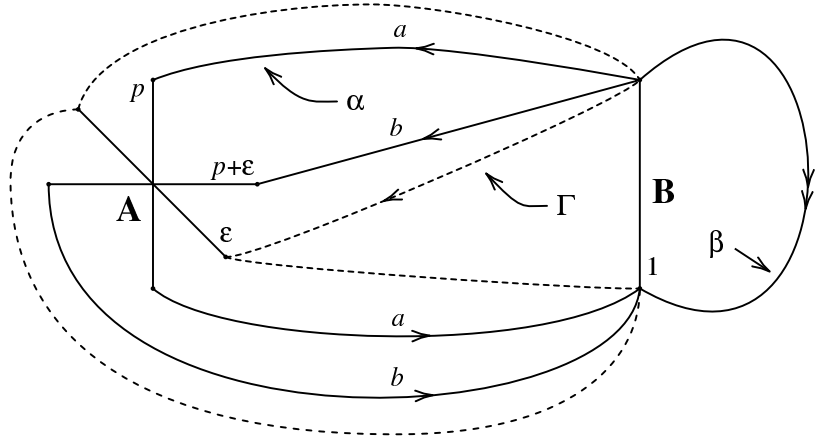}
\caption{This figure and Lemma \ref{Commutator invariance} show that, if $(\alpha, \beta)$ is a pair of disjoint primitives in $\partial H$ with an R-R diagram of the form shown in Figure \ref{DisPCFig3a}, then there exists a curve $\Gamma$ in $\partial H$, separating $\alpha$ and $\beta$, and a once-punctured torus $F$, such that $H$ is homeomorphic to $F \boldsymbol{\times} I$, under a homeomorphism which takes $\partial F \boldsymbol{\times} I$ to a regular neighborhood of $\Gamma$ in $\partial H$. This follows from Lemma \ref{Commutator invariance}, since $\Gamma$ represents $A^{\epsilon}B^{-1}A^{-\epsilon}B$ in $\pi_1(H)$ with $\epsilon = \pm 1$.}
\label{DisPCFig3b}
\end{figure}

\begin{figure}[htbp]
\centering
\includegraphics[width = 0.8\textwidth]{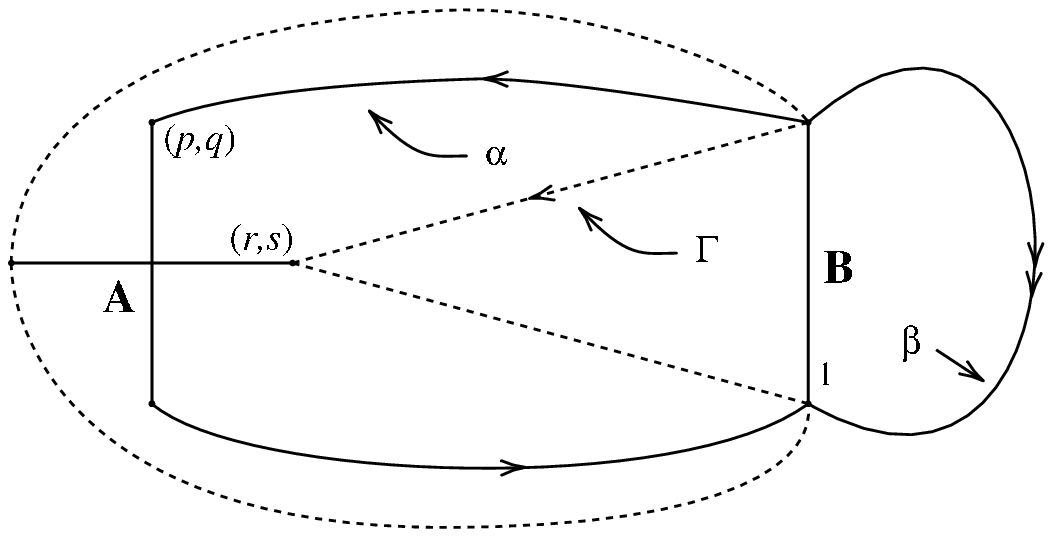}
\caption{This R-R diagram shows that if $\alpha$ and $\beta$ have an R-R diagram with the form of Figure \ref{DisPCFig2a}, and $\Gamma$ is a simple closed curve in $\partial H$, separating $\alpha$ and $\beta$, such that $\Gamma$ represents a cyclic word in $\pi_1(H)$ with no more than four syllables, then $\alpha$, $\beta$, and $\Gamma$ have an R-R diagram with the form of this figure. Here the pairs of parameters $(p,q)$ and $(r,s)$ are intersection numbers of connections on the A-handle of this R-R diagram 
with $\partial D_A$ and a simple closed longitudinal curve $l$ on the A-handle. (By twisting $l$ around $\partial D_A$ when $p \neq 0$, we may assume $|r| < |p|$.) Note $|l \cap \partial D_A| = 1$ if and only if $ps - rq = \pm 1$. Then, by Lemma \ref{Commutator invariance}, $(\alpha, \beta)$ is a Type II pair if and only if $r = \pm 1$, and this occurs if and only if $q = \pm(ps \mp 1)$.
}
\label{DisPCFig2b}
\end{figure}

\subsection{Ends of (twisted) products}

\begin{defn}
Suppose $F$ is a once-punctured torus. Then $F \boldsymbol{\times} I$ is a genus two handlebody $H$, and the surfaces $F \boldsymbol{\times} 0$ and $F \boldsymbol{\times} 1$ in $\partial H$ are the \emph{ends} of the product $F \boldsymbol{\times} I$. Suppose $\delta$ is a nonseparating simple closed curve lying in $F \boldsymbol{\times} 0$ or $F \boldsymbol{\times} 1$. If $\delta$ is pushed into the interior of $F \boldsymbol{\times} I$ by an isotopy and Dehn surgery is performed on $\delta$, the result is another genus two handlebody $H'$. Then $H'$ is a \emph{twisted product} $F \widetilde{\boldsymbol{\times}} I$, and the two surfaces $F \widetilde{\boldsymbol{\times}} 0$ and $F \widetilde{\boldsymbol{\times}} 1$ in $\partial H'$ are the \emph{ends} of the twisted product $F \widetilde{\boldsymbol{\times}} I$.
\end{defn}

\subsection{Type I and Type II pairs of disjoint primitives}

\begin{defn}
Suppose $(\alpha, \beta)$ is a pair of disjoint nonparallel primitive simple closed curves in the boundary of a genus two handlebody $H$. The pair $(\alpha, \beta)$ is a \emph{Type} I pair if there is a cutting disk $D$ of $H$ such that $|\alpha \cap \partial D| = 1$ and $|\beta \cap \partial D| = 1$. The pair $(\alpha, \beta)$ is a \emph{Type} II pair if there is a once-punctured torus $F$ and a homeomorphism $h \colon H \to F \boldsymbol{\times} I$ such that $h(\alpha) \subset F \boldsymbol{\times} 1$ and $h(\beta) \subset F \boldsymbol{\times} 0$. (Somewhat loosely, Type I pairs are \emph{twisted pairs}, while Type II pairs are \emph{untwisted pairs}.)
\end{defn}

The remaining results of this section show that pairs of disjoint nonparallel primitives on the boundary of a genus two handlebody either lie on disjoint ends of an ordinary product $F \boldsymbol{\times} I$, or on disjoint ends of a twisted product $F \widetilde{\boldsymbol{\times}} I$, where $F$ is a once-punctured torus.

We begin with the following lemma which characterizes $(\alpha, \beta)$ pairs which lie on disjoint ends of a product, $F \boldsymbol{\times} I$, where $F$ is a once-punctured torus.

\begin{lem} \label{Commutator invariance}
Suppose $H$ is a genus two handlebody, $\alpha$ and $\beta$ are two disjoint nonparallel simple closed curves in $\partial H$, each of which is primitive in $H$, and  $A$ and $B$ are a pair of free generators of $\pi_1(H)$. Then there is a once-punctured torus $F$ and a homeomorphism $h \colon H \to F \boldsymbol{\times} I$ such that $h(\alpha) \subset F \boldsymbol{\times} 1$ and $h(\beta) \subset F \boldsymbol{\times} 0$ if and only if there exists a simple closed curve $\Gamma$ in $\partial H$ separating $\alpha$ and $\beta$ such that $\Gamma$ represents the cyclic word $ABA^{-1}B^{-1}$ or its inverse in $\pi_1(H)$.
\end{lem}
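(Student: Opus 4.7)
The proof divides naturally into the two directions, with the backward direction being substantially harder.

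For the forward direction ($\Rightarrow$), suppose $h \colon H \to F \boldsymbol{\times} I$ is a homeomorphism with $h(\alpha) \subset F \boldsymbol{\times} 1$ and $h(\beta) \subset F \boldsymbol{\times} 0$. Set $\Gamma := h^{-1}(\partial F \boldsymbol{\times} \{1/2\})$, a simple closed curve in the annular region $\partial F \boldsymbol{\times} I$ of $\partial(F \boldsymbol{\times} I)$; it separates $F \boldsymbol{\times} 0$ from $F \boldsymbol{\times} 1$ in $\partial(F \boldsymbol{\times} I)$, so $\Gamma$ separates $\alpha$ from $\beta$ in $\partial H$. Under $h_*$, the class of $\Gamma$ maps to the class of $\partial F$, which for any free basis $\{a,b\}$ of $\pi_1(F)$ equals $[a,b]$. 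The classical fact that the conjugacy class of $[A,B]$ in $F_2$ is fixed up to inversion by $\mathrm{Aut}(F_2)$ (equivalently, for any basis $\{a,b\}$ of $F_2$, $[a,b]$ is conjugate to $[A,B]^{\pm 1}$) then gives that $\Gamma$ represents $[A,B]^{\pm 1}$ in $\pi_1(H)$.

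For the backward direction ($\Leftarrow$), the strategy is to produce a properly embedded once-punctured torus $F \subset H$ with $\partial F = \Gamma$ such that $F \hookrightarrow H$ induces a $\pi_1$-isomorphism. Once such $F$ is in hand, the inclusion is a homotopy equivalence of aspherical spaces; by standard handlebody topology (a genus-two handlebody carrying a once-punctured-torus subsurface of its boundary whose inclusion is a $\pi_1$-isomorphism is a product), $H \cong F \boldsymbol{\times} I$, and the placement of $\alpha,\beta$ on opposite ends is forced since $\Gamma = \partial F$ separates them in $\partial H$. To construct $F$, note that $\Gamma$ cuts $\partial H$ into two once-punctured tori $T_\alpha \supset \alpha$ and $T_\beta \supset \beta$; I would show that $\iota_\alpha \colon T_\alpha \hookrightarrow H$ induces a $\pi_1$-isomorphism, then push $T_\alpha$ slightly into $\mathrm{int}(H)$ to obtain $F$. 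Since $\alpha$ is primitive in $H$, it is nontrivial in $\pi_1(T_\alpha)$; it is also not boundary-parallel in $T_\alpha$, else its conjugacy class in $\pi_1(H)$ would equal that of $\Gamma$, whereas $\alpha$ is primitive while $\Gamma$ represents the commutator $[A,B]^{\pm 1}$ and thus lies in $[F_2,F_2]$. So $\alpha$ is a simple, nonseparating curve in the once-punctured torus $T_\alpha$, and extends to a free basis $\{\alpha,\gamma\}$ of $\pi_1(T_\alpha)$ with $[\alpha,\gamma]$ representing $\Gamma$. Consequently $\iota_{\alpha *}\alpha$ is primitive in $\pi_1(H)$ and $[\iota_{\alpha *}\alpha,\iota_{\alpha *}\gamma]$ is conjugate to $[A,B]^{\pm 1}$.

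The main obstacle is the algebraic lemma that must then be established: if $x \in F_2$ is primitive and $[x,y]$ is conjugate to $[A,B]^{\pm 1}$, then $\{x,y\}$ is a free basis of $F_2$. By an automorphism of $F_2$ sending the primitive $x$ to $A$, this reduces to analyzing the equation $[A,z] \sim [A,B]^{\pm 1}$ in $F_2$; using that the centralizer of $A$ in $F_2$ is $\langle A \rangle$, one solves for $z$ up to conjugation and verifies that $\{A,z\}$ is always a basis, obtainable from $\{A,B\}$ by Nielsen moves. Given this lemma, $\iota_{\alpha *}$ is surjective, so by the Hopfian property of $F_2$ it is an isomorphism, completing the construction of $F$ and the proof.
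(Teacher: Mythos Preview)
Your forward direction matches the paper's exactly.

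For the backward direction, the paper takes a much more direct geometric route. From the hypothesis that $\Gamma$ represents the cyclic word $ABA^{-1}B^{-1}$, the paper immediately concludes that there is a complete set of cutting disks $\{D_A,D_B\}$ of $H$ with $|\partial D_A\cap\Gamma|=|\partial D_B\cap\Gamma|=2$. Each disk then meets each of the once-punctured tori $T_\alpha$, $T_\beta$ in a single essential arc, and the product structure $H\cong F\times I$ with $\Gamma=\partial F\times\tfrac12$ is read off directly from this picture. In particular, the paper's argument never uses the primitivity of $\alpha$ or $\beta$ in the backward direction.

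Your route is genuinely different and considerably heavier. You use the primitivity of $\alpha$ together with an algebraic lemma (\emph{$x$ primitive and $[x,y]$ conjugate to $[A,B]^{\pm1}$ implies $\{x,y\}$ is a basis}) to show that $\iota_{\alpha*}$ is a $\pi_1$-isomorphism, and then invoke a general $3$-manifold result (``$\pi_1$-isomorphic incompressible subsurface of the boundary forces a product'') to conclude. Both ingredients are correct---the algebraic lemma reduces, as you say, to solving $[A,z]\sim[A,B]^{\pm1}$ in $F_2$, and the topological step is a consequence of Waldhausen-type results on $I$-bundles---but neither is as immediate as you suggest. The algebraic lemma deserves a full proof rather than a sketch, and the ``standard handlebody topology'' assertion really needs a citation or an argument; in a paper this elementary, treating it as folklore is a gap in exposition if not in logic.

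What the paper's approach buys is economy: one observation about cutting disks replaces both your algebraic lemma and your appeal to $3$-manifold machinery. What your approach buys is a cleaner conceptual separation (algebra first, then topology) and a proof that makes explicit why the once-punctured torus on the $\alpha$ side already carries all of $\pi_1(H)$.
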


\begin{proof}
The proof follows directly from the well-known fact, see Proposition 5.1 of \cite{LS77}, that any automorphism of the free group of rank two $F(A,B)$ carries the cyclic word represented by the commutator $ABA^{-1}B^{-1}$ onto itself or its inverse. 

Suppose there is a once-punctured torus $F$ and a homeomorphism $h \colon H \to F \boldsymbol{\times} I$ such that $h(\alpha) \subset F \boldsymbol{\times} 1$ and $h(\beta) \subset F \boldsymbol{\times} 0$. Let $X$ and $Y$ be a pair of free generators of $\pi_1(F)$. Then the simple closed curve $\Gamma = h^{-1}(\partial F \boldsymbol{\times} \frac{1}{2})$ in $\partial H$ separates $\alpha$ and $\beta$ and $\Gamma$ represents a cyclic word in $\pi_1(H)$ equal to the commutator $h^{-1}(XYX^{-1}Y^{-1})$ or its inverse in $\pi_1(H)$.

Conversely, suppose there exists a simple closed curve $\Gamma$ in $\partial H$ separating $\alpha$ and $\beta$, and a pair of free generators $A$ and $B$ of $\pi_1(H)$ such that $\Gamma$ represents the cyclic word $ABA^{-1}B^{-1}$ or its inverse in $\pi_1(H)$. Then there exists a complete set of cutting disks $\{D_A, D_B\}$ of $H$ such that $|\partial D_A \cap \Gamma| = |\partial D_B \cap \Gamma| = 2$. It follows readily that there exists a once-punctured torus $F$, together with a pair of nonparallel connections $\delta_A$, $\delta_B$ in $F$, and a homeomorphism $h \colon H \to F \boldsymbol{\times} I$ such that $D_A = h^{-1}(\delta_A \boldsymbol{\times} I)$, $D_B = h^{-1}(\delta_B \boldsymbol{\times} I)$, and $\Gamma = h^{-1}(\partial F \boldsymbol{\times} \frac{1}{2})$. So, in particular, either $h(\alpha) \subset F \boldsymbol{\times} 1$ and $h(\beta) \subset F \boldsymbol{\times} 0$, or $h(\beta) \subset F \boldsymbol{\times} 1$ and $h(\alpha) \subset F \boldsymbol{\times} 0$.
\end{proof}

\begin{cor}
If $H$ is a genus two handlebody and $(\alpha, \beta)$ is a pair of disjoint primitives in $\partial H$ with an R-R diagram of the form of Figure \emph{\ref{DisPCFig3a}}, then $\alpha$ and $\beta$ lie on disjoint ends of a product $F \boldsymbol{\times} I$, where $F$ is a once-punctured torus.
\end{cor}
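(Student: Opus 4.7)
The plan is to exhibit a simple closed curve $\Gamma$ on $\partial H$ that separates $\alpha$ and $\beta$ and that represents, as a cyclic word in $\pi_1(H)$, the commutator $ABA^{-1}B^{-1}$ or its inverse; the corollary will then follow immediately from Lemma \ref{Commutator invariance}.

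First, working directly in the R-R diagram of Figure \ref{DisPCFig3a}, I would build $\Gamma$ as sketched in Figure \ref{DisPCFig3b}. The idea is to take a push-off of $\beta$ across the $B$-handle and splice it with an arc that crosses the belt annulus and runs through the $A$-handle in the ``corridor'' between the two parallel bands of $\alpha$-connections, namely the bands with exponents $p$ and $p+\epsilon$. Since these bands are consecutive (their exponents differ by $\epsilon = \pm 1$), such a corridor exists and can be traversed by a single arc meeting each band exactly once while staying disjoint from $\alpha$ and $\beta$. Joining up this arc with the push-off of $\beta$ yields an embedded simple closed curve $\Gamma$ with $\alpha$ on one side and $\beta$ on the other. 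Verifying rigorously that $\Gamma$ is simple and genuinely separates $\alpha$ from $\beta$ in $\partial H$ is the step I expect to be the main obstacle, since it is the only part of the argument that uses the geometry of the R-R diagram rather than purely algebraic manipulations.

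Second, I would compute the element represented by $\Gamma$ in $\pi_1(H)$ with respect to the generators $A$ and $B$ dual to $D_A$ and $D_B$. Each piece of the belt contributes either a $B$ or a $B^{-1}$ (from the two sides of $\beta$ that $\Gamma$ parallels), while the corridor arc crosses the $A$-handle contributing $A^{\epsilon}$ in one direction and $A^{-\epsilon}$ in the other. The resulting cyclic word is
\[
\Gamma \;=\; A^{\epsilon} B^{-1} A^{-\epsilon} B \;=\; [A^{\epsilon}, B^{-1}].
\]

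Third, since $\{A^{\epsilon}, B^{-1}\}$ is also a basis of $\pi_1(H) = F(A,B)$, there is an automorphism of $F(A,B)$ taking $(A,B)$ to $(A^{\epsilon}, B^{-1})$. By the well-known fact (Proposition 5.1 of \cite{LS77}, already invoked in the proof of Lemma \ref{Commutator invariance}) that every automorphism of $F(A,B)$ preserves the cyclic word $ABA^{-1}B^{-1}$ up to inverse, it follows that $\Gamma$ represents the cyclic word $ABA^{-1}B^{-1}$ or its inverse in $\pi_1(H)$. Lemma \ref{Commutator invariance} then furnishes a once-punctured torus $F$ and a homeomorphism $h \colon H \to F \boldsymbol{\times} I$ with $h(\alpha) \subset F \boldsymbol{\times} 1$ and $h(\beta) \subset F \boldsymbol{\times} 0$, which is exactly the conclusion of the corollary.
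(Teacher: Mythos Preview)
Your approach is essentially the same as the paper's: exhibit the separating curve $\Gamma$ of Figure~\ref{DisPCFig3b}, read off that it represents $A^{\epsilon}B^{-1}A^{-\epsilon}B$ in $\pi_1(H)$, and invoke Lemma~\ref{Commutator invariance}. The paper's proof is terser---it simply cites the figure for the existence and word of $\Gamma$ and then applies the lemma---while you spell out the commutator-invariance step explicitly; your informal ``corridor'' description of $\Gamma$ is a little loose (the phrase ``meeting each band exactly once'' should really say that the new connection on the $A$-handle has label $\epsilon$ and is disjoint from the $\alpha$-bands), but the strategy and conclusion match the paper exactly.
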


\begin{proof}
Figure \ref{DisPCFig3b} shows that, if $\alpha$ and $\beta$ have an R-R diagram with the form of Figure \ref{DisPCFig3a}, then there exists a simple closed $\Gamma$ in $\partial H$ separating $\alpha$ and $\beta$ such that $\Gamma$ represents $A^{\epsilon}B^{-1}A^{-\epsilon}B$ in $\pi_1(H)$ with $\epsilon = \pm1$. Then Lemma \ref{Commutator invariance} implies the claim.
\end{proof}

\begin{lem}
Suppose $H$ is a genus two handlebody and $(\alpha, \beta)$ is a pair of disjoint primitives in $\partial H$ with an R-R diagram of the form of Figure \emph{\ref{DisPCFig2a}}. 
\begin{enumerate}
\item If $r = 0$, $\alpha$ and $\beta$ are separated and also have an R-R diagram with the form of Figure \emph{\ref{DisPCFig1a}}.
\item If $|r| = 1$, then $\alpha$ and $\beta$ lie on disjoint ends of a product $F \boldsymbol{\times} I$, where $F$ is a once-punctured torus.
\item If $|r| > 1$, then $\alpha$ and $\beta$ lie on disjoint ends of a twisted product $F \widetilde{\boldsymbol{\times}} I$, where $F$ is a once-punctured torus.
\end{enumerate}
\end{lem}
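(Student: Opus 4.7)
The plan is to leverage the construction of Figure \ref{DisPCFig2b}: from any pair $(\alpha, \beta)$ with R-R diagram of the form of Figure \ref{DisPCFig2a} one obtains a separating simple closed curve $\Gamma$ in $\partial H$, disjoint from $\alpha$ and $\beta$, whose A-handle connection has parameters $(r, s)$ subject to the Bezout identity $ps - rq = \pm 1$. This $r$ is the parameter referenced in the lemma, and the three cases correspond to its qualitative ranges.

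For (1), if $r = 0$, the Bezout identity forces $p = \pm 1$. Combined with $|\alpha \cap \partial D_B| = 1$, this shows $(\alpha, \beta)$ is actually a primitive pair, which immediately yields the R-R diagram of Figure \ref{DisPCFig1a}; a standard bandsum of the new dual cutting disks then supplies the separating disk in $H$ that establishes separation. For (2), if $|r| = 1$, a direct reading of the R-R diagram of Figure \ref{DisPCFig2b} shows that $\Gamma$ represents $ABA^{-1}B^{-1}$ or its inverse in $\pi_1(H)$, so Lemma \ref{Commutator invariance} furnishes the product structure $F \boldsymbol{\times} I$ with $\alpha$ and $\beta$ on opposite ends.

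Part (3), with $|r| > 1$, is the main obstacle. Here $\Gamma$ is no longer a commutator and Lemma \ref{Commutator invariance} does not directly apply. The strategy is to realize $H$ as a Dehn surgery on a product handlebody, thereby exhibiting the twisted product structure directly from its definition. First I would construct an auxiliary product $H^{*} = F \boldsymbol{\times} I$ by modifying the R-R diagram: replace $\Gamma$'s parameters $(r, s)$ with $(\epsilon, s_0)$, where $\epsilon = \pm 1$ is chosen so that $p s_0 - \epsilon q = \pm 1$; by part (2) this modified diagram describes a product handlebody carrying the same $\alpha$ and $\beta$ on its boundary. Next, identify $H$ with a Dehn surgery on $H^{*}$ along a nonseparating curve $\delta$ obtained by pushing a longitudinal curve of the A-handle into the interior, with surgery slope governed by the integer $r - \epsilon$. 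By the definition of twisted product this yields $H \cong F \widetilde{\boldsymbol{\times}} I$ with $\alpha$ and $\beta$ on its two ends. The critical and most delicate step is choosing $\delta$ and the surgery slope so that the resulting boundary R-R diagram reproduces the prescribed $(r, s)$ parameters on $\Gamma$; once this bookkeeping is carried out, the twisted product conclusion follows directly from the definition.
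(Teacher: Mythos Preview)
Your proposal is correct and follows the same overall architecture as the paper: everything hinges on the separating curve $\Gamma$ of Figure~\ref{DisPCFig2b}, which represents $A^{r}B^{-1}A^{-r}B$ in $\pi_1(H)$, and the three cases are read off from $r$.

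For part~(1) the paper is more direct than you are: since $\Gamma$ represents the trivial word when $r=0$, it bounds a separating disk in $H$, and separation is immediate. Your Bezout argument ($ps=\pm1$, hence $p=\pm1$, hence $(\alpha,\beta)$ is a primitive pair) is valid and has the minor advantage of making the Figure~\ref{DisPCFig1a} form explicit, but it is a detour. Part~(2) is identical to the paper's.

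For part~(3) you and the paper are doing the same surgery, viewed from opposite ends. The paper does not build an auxiliary product $H^{*}$ and then surger toward $H$; instead it notes that $\Gamma\cap F_A$ consists of exactly two connections, so there is a nonseparating simple closed curve $\lambda\subset F_A$ with $|\Gamma\cap\lambda|=2$. A single Dehn surgery on the core of the $A$-handle makes $\lambda$ bound the new cutting disk $D_A'$; in the surgered handlebody $H'$ one then has $|\Gamma\cap\partial D_A'|=|\Gamma\cap\partial D_B|=2$, and Lemma~\ref{Commutator invariance} applies directly to $H'$. The inverse surgery exhibits $H$ as a twisted product. This geometric choice of $\lambda$ completely bypasses the ``bookkeeping'' step you flag as delicate: there is no need to solve for $(\epsilon,s_0)$ or to match slopes, because the intersection number $|\Gamma\cap\lambda|=2$ is forced by the picture.
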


\begin{proof}
Figure \ref{DisPCFig2b} shows that, if $\alpha$ and $\beta$ have an R-R diagram with the form of Figure \ref{DisPCFig3a}, then there exists a simple closed $\Gamma$ in $\partial H$ separating $\alpha$ and $\beta$ such that $\Gamma$ represents $A^{r}B^{-1}A^{-r}B$ in $\pi_1(H)$. If $r = 0$, $\Gamma$ bounds a separating disk in $H$. So $\alpha$ and $\beta$ are separated. Otherwise, if $|r| = 1$, Lemma \ref{Commutator invariance} shows $\alpha$ and $\beta$ lie on disjoint ends of a product $F \boldsymbol{\times} I$. 

This leaves the case $|r| > 1$. In this case, there is a nonseparating simple closed curve $\lambda$ in $F_A$ such that $|\Gamma \cap \lambda| = 2$. Then there is a Dehn surgery on a core curve of the A-handle of $H$ which turns $H$ into another genus two handlebody $H'$ in which $\lambda$ bounds a cutting disk and, by Lemma \ref{Commutator invariance}, $\alpha$ and $\beta$ lie on disjoint ends of a product $F \boldsymbol{\times} I$. From this it is easy to see that $\alpha$ and $\beta$ lie on disjoint ends of a twisted product $F \widetilde{\boldsymbol{\times}} I$.
\end{proof}

This next result characterizes the cyclic words in $\pi_1(H)$ which are represented by the separating simple closed curves $\Gamma$ in Figures \ref{DisPCFig3b} and \ref{DisPCFig2b}. 

\begin{prop} \label{Disjoint primitives sit on a product or twisted product}
Suppose $H$ is a genus two handlebody with a pair of disjoint nonparallel simple closed curves $\alpha$ and $\beta$ in $\partial H$ such that both $\alpha$ and $\beta$ are primitive in $H$. Then there is a simple closed curve $\Gamma$ in $\partial H$ separating $\alpha$ and $\beta$, and a complete set of cutting disks $\{D_A, D_B\}$ of $H$, such that either $|\Gamma \cap \partial D_A| = 2$, or $|\Gamma \cap \partial D_B| = 2$. In particular, up to replacing $A$ with $A^{-1}$, $B$ with $B^{-1}$, or perhaps exchanging $A$ and $B$, there is an integer $n$ such that $\Gamma$ represents the cyclic word $A^nBA^{-n}B^{-1}$ in $\pi_1(H)$.
\end{prop}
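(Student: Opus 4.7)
The plan is to invoke Theorem \ref{3 case theorem} to force $(\alpha,\beta)$ into one of the three R-R diagram forms, and in each case to appeal to the already constructed separating curves from Figures \ref{DisPCFig1b}, \ref{DisPCFig2b}, \ref{DisPCFig3b} together with their supporting lemma and corollary. Both the intersection counts $|\Gamma \cap \partial D_A|$ and $|\Gamma \cap \partial D_B|$ and the cyclic word that $\Gamma$ represents can be read directly off these R-R diagrams, and the generator changes permitted in the statement ($A \leftrightarrow A^{-1}$, $B \leftrightarrow B^{-1}$, and the swap $A \leftrightarrow B$) are precisely what will unify the three outputs into the single normal form $A^{n}BA^{-n}B^{-1}$.

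First I would fix a complete set of cutting disks $\{D_A, D_B\}$ exactly as in the proof of Theorem \ref{3 case theorem}, and split into cases according to which of Figures \ref{DisPCFig1a}, \ref{DisPCFig2a}, \ref{DisPCFig3a} realizes $(\alpha,\beta)$. In the Figure \ref{DisPCFig1a} case, Figure \ref{DisPCFig1b} exhibits a separating $\Gamma$ representing $ABA^{-1}B^{-1}$, with $|\Gamma \cap \partial D_A| = |\Gamma \cap \partial D_B| = 2$, yielding $n = 1$. In the Figure \ref{DisPCFig3a} case, the corollary to Lemma \ref{Commutator invariance} already supplies $\Gamma$ with $|\Gamma \cap \partial D_A| = |\Gamma \cap \partial D_B| = 2$ representing $A^{\epsilon}B^{-1}A^{-\epsilon}B$ for some $\epsilon = \pm 1$; the substitution $B \mapsto B^{-1}$ then puts the cyclic word into the form $A^{\epsilon}BA^{-\epsilon}B^{-1}$, so $n = \epsilon$.

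The Figure \ref{DisPCFig2a} case requires the most care and is where the main bookkeeping lives. The lemma following the corollary produces $\Gamma$ representing $A^{r}B^{-1}A^{-r}B$, with $r$ the A-handle parameter of Figure \ref{DisPCFig2b}. When $r \neq 0$, Figure \ref{DisPCFig2b} directly shows $|\Gamma \cap \partial D_B| = 2$, and after the substitution $B \mapsto B^{-1}$ the word becomes $A^{r}BA^{-r}B^{-1}$, giving $n = r$. When $r = 0$, the $\Gamma$ of Figure \ref{DisPCFig2b} bounds a separating disk and is disjoint from both $\partial D_A$ and $\partial D_B$; however, part (1) of that same lemma says that in this subcase $(\alpha, \beta)$ also admits an R-R diagram of the form of Figure \ref{DisPCFig1a}, so I would drop back into the first case to obtain a valid $\Gamma$ with $n = 1$.

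The main obstacle is organizational rather than geometric: the three preceding constructions produce words of the slightly different shapes $A^{n}B^{\pm 1}A^{-n}B^{\mp 1}$, so I need to verify in each diagram that the intersection condition $|\Gamma \cap \partial D_A| = 2$ or $|\Gamma \cap \partial D_B| = 2$ holds, then invoke the permitted generator changes to rewrite each output in the uniform normal form $A^{n}BA^{-n}B^{-1}$. No new construction of $\Gamma$ is needed, and the $r = 0$ fallback in the Figure \ref{DisPCFig2a} case is the one point that requires genuine care.
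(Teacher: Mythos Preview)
Your proposal is correct and follows essentially the same approach as the paper: both arguments reduce to inspecting the separating curve $\Gamma$ in Figures~\ref{DisPCFig1b}, \ref{DisPCFig2b}, and \ref{DisPCFig3b} and reading off $|\Gamma \cap \partial D_B| = 2$ together with the represented cyclic word. The paper's two-sentence proof simply asserts this uniformly and omits your $r=0$ fallback, since the statement allows $n=0$ and the curve $\Gamma$ in Figure~\ref{DisPCFig2b} still meets $\partial D_B$ in two points even when $r=0$.
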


\begin{proof}
Examination of the curve $\Gamma$ in Figures \ref{DisPCFig1b}, \ref{DisPCFig3b}, and \ref{DisPCFig2b}, shows that, in each case, $\Gamma$ separates $\alpha$ and $\beta$ and $|\Gamma \cap \partial D_B| = 2$. It follows that $\Gamma$ represents a cyclic word in $\pi_1(H)$ of the claimed form.
\end{proof}

Finally, the following theorem restates the classification in terms of Type I and Type II pairs.

\begin{thm}\label{Type I and Type II Thm}
If $(\alpha, \beta)$ is a pair of disjoint nonparallel primitive simple closed curves in the boundary of a genus two handlebody $H$, then $(\alpha, \beta)$ is either a Type~\emph{I} or Type~\emph{II} pair. In particular:
\begin{enumerate}
\item $(\alpha, \beta)$ is a Type~\emph{I} pair if $\alpha$ and $\beta$ have an R-R diagram with the form of Figure \emph{\ref{DisPCFig2a}}. 

\item $(\alpha, \beta)$ is a Type~\emph{II} pair if $\alpha$ and $\beta$ have an R-R diagram with the form of Figure \emph{\ref{DisPCFig3a}}. 

\item $(\alpha, \beta)$ is both a Type~\emph{I} pair and a Type~\emph{II} pair if and only if $\alpha$ and $\beta$ are separated in $H$, or $\alpha$ and $\beta$ have an R-R diagram with the form of Figure \emph{\ref{DisPCFig2a}} in which $\alpha$ wraps around the A-handle of the R-R diagram $p$ times longitudinally and $q$ times meridionally with $q = ps \pm 1$ for some integer s.  \emph{(}See Figure~\emph{\ref{DisPCFig2b}}.\emph{)}
\end{enumerate}
\end{thm}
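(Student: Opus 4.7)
The plan is to assemble Theorem \ref{3 case theorem} with the Type I and Type II characterizations already developed in this section. Applying Theorem \ref{3 case theorem} reduces to the three R-R diagram cases: Figure \ref{DisPCFig1a}, \ref{DisPCFig2a}, and \ref{DisPCFig3a}. For part (1), a Figure \ref{DisPCFig2a} diagram has $|\alpha \cap \partial D_B| = 1 = |\beta \cap \partial D_B|$ by definition, so the cutting disk $D_B$ is itself a Type I cutting disk. For part (2), a Figure \ref{DisPCFig3a} diagram gives Type II by the corollary to Lemma \ref{Commutator invariance} proved just above.

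The remaining Figure \ref{DisPCFig1a} case is handled separately for the ``every pair is Type I or Type II'' conclusion: Figure \ref{DisPCFig1b} exhibits a separating curve $\Gamma$ representing $ABA^{-1}B^{-1}$, so Lemma \ref{Commutator invariance} furnishes Type II. For the $(\Leftarrow)$ direction of (3), a separated pair must give a Figure \ref{DisPCFig1a} diagram (each side of the separating disk is a solid torus in which the primitive must be a core), and this diagram is both Type II (via Figure \ref{DisPCFig1b}) and Type I (a bandsum of $D_A$ and $D_B$ along an arc in $\partial H \setminus (\alpha \cup \beta)$ is a cutting disk meeting $\alpha$ and $\beta$ each once). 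A Figure \ref{DisPCFig2a} diagram with $q = ps \pm 1$ is Type I by (1); by the Figure \ref{DisPCFig2b} analysis the condition $q = ps \pm 1$ is equivalent to $|r| = 1$, and the earlier lemma on Figure \ref{DisPCFig2a} cases then delivers Type II.

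For the $(\Rightarrow)$ direction of (3), assume $(\alpha,\beta)$ is both Type I and Type II, and consider each R-R diagram case. Figure \ref{DisPCFig1a} yields separated immediately. Figure \ref{DisPCFig3a} must be excluded: a Type I cutting disk $E$ satisfies $|\beta \cap \partial E| = 1$, so after extending $\{E\}$ to a complete cutting set $\{E, E'\}$ and repeatedly band-summing $E'$ onto $E$ to eliminate intersections with the primitive $\beta$, one obtains a complete cutting set $\{E, D_\beta\}$, where $D_\beta$ is the unique cutting disk of $H$ disjoint from $\beta$. For this complete set, $|\alpha \cap \partial E| = 1$, contradicting the minimality $|\alpha \cap \partial D_B| > 1$ that defines Figure \ref{DisPCFig3a}. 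In the Figure \ref{DisPCFig2a} case, Type II together with Lemma \ref{Commutator invariance} supplies a separating curve $\Gamma$ representing a basis commutator, a cyclic word of four syllables; Figure \ref{DisPCFig2b} then applies, and requiring the word to be a basis commutator forces $r = \pm 1$, equivalently $q = ps \pm 1$.

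The main obstacle is the Figure \ref{DisPCFig3a} exclusion in the $(\Rightarrow)$ direction: one must carry out the band-sum reduction carefully enough to produce a complete cutting set $\{E, D_\beta\}$ witnessing a strictly smaller value of $|\alpha \cap \partial D_B|$, which in turn relies on the uniqueness (up to isotopy) of the cutting disk disjoint from a given primitive curve.
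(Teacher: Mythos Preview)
Your proposal is correct and follows exactly the route the paper intends: the paper's own proof reads in full ``This follows from the other results of this section. Details omitted,'' and what you have written is a faithful assembly of those results (Theorem~\ref{3 case theorem}, Lemma~\ref{Commutator invariance} and its corollary, the lemma on the Figure~\ref{DisPCFig2a} cases, and the captions of Figures~\ref{DisPCFig1b}, \ref{DisPCFig2b}, \ref{DisPCFig3b}). The one step you supply that is not already packaged in a prior statement---that a Figure~\ref{DisPCFig3a} pair cannot be Type~I because a Type~I disk $E$ would, together with the unique disk $D_\beta$ disjoint from $\beta$, form a complete cutting set violating the minimality of $|\alpha\cap\partial D_B|$---is sound; a slightly cleaner way to produce $\{E,D_\beta\}$ than iterated band-sums is to cut $H$ along $E$ (obtaining a solid torus in which $\beta$ is a single properly embedded arc) and take a meridian disk missing that arc.
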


\begin{proof}
This follows from the other results of this section. Details omitted.
\end{proof}

\section{Pairs in which $\beta$ is a proper power and $\alpha$ is a primitive or proper power}

A nonseparating simple closed curve $\beta$ in the boundary of a genus two handlebody $H$ is a \emph{proper power} if $\beta$ is disjoint from an essential separating disk in $H$, $\beta$ does not bound a disk in $H$, and $\beta$ is not primitive in $H$.
With the classification of $\alpha$, $\beta$ pairs in which both $\alpha$ and $\beta$ are primitives finished, it seems natural to generalize slightly to the situation in which one or both of $\alpha$, $\beta$ are proper powers of primitives.

Here, as promised in the abstract, the situation is simpler and completely described by the following theorem. 
 
\begin{thm} \label{The case beta is a proper power}
Suppose $H$ is a genus two handlebody, and $\alpha$ and $\beta$ are two disjoint nonparallel simple closed curves in $\partial H$ such that $\beta$ is a proper power in $H$ and $\alpha$ is primitive or a proper power in $H$. Then either $\alpha$ and $\beta$ are separated in $H$, or $\alpha$ and $\beta$ bound a nonseparating annulus in $H$.
\end{thm}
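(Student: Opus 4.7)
The plan is to pick an essential separating disk $E \subset H$ with $\beta \cap E = \emptyset$ (which exists because $\beta$ is a proper power), write $H = V_1 \cup_E V_2$ with $V_1, V_2$ solid tori and $\beta \subset \partial V_2$, and choose a free basis $(A,B)$ of $\pi_1(H)$ adapted to this splitting so that $\beta = B^k$ with $|k| \ge 2$. Set $T_i = \partial V_i \cap \partial H$ (once-punctured tori). First suppose $\alpha$ can be isotoped off $E$, so that $\alpha$ lies in $T_1$ or $T_2$. If $\alpha \subset T_1$ then $E$ itself is the required separating disk. If $\alpha \subset T_2$, then $\alpha$ and $\beta$ are disjoint nonseparating simple closed curves in the once-punctured torus $T_2$; a slope computation on the closed torus $\partial V_2$ forces them to have the same slope (up to sign), so they are isotopic in $T_2$ and hence parallel in $\partial H$, contradicting the hypothesis.

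The main case is when every representative of $\alpha$ meets every representative of $E$. Put $\alpha$ in minimal position with respect to $E$ and to cutting disks $D_A \subset V_1$ and $D_B \subset V_2$ dual to $(A,B)$. Each arc of $\alpha \cap V_2$ is an essential arc in the pair of pants $P_B = T_2 \setminus \nu(\beta)$ with both endpoints on $\partial T_2$; there are exactly two such isotopy classes ($\delta_+$ and $\delta_-$, going around opposite sides of $\beta$), and each such arc contributes a syllable $B^{\pm k}$ to the cyclic word of $\alpha$ in $(A,B)$ because it is isotopic rel endpoints, via an annulus in $P_B$, to $\beta$ together with a subarc of $\partial T_2$. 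Each arc of $\alpha \cap V_1$ in the once-punctured torus $T_1$ contributes a nontrivial syllable $A^{m_i}$ with $m_i \ne 0$ by minimality. Hence the cyclic word of $\alpha$ in $(A,B)$ has the form $A^{m_1} B^{\epsilon_1 k} A^{m_2} B^{\epsilon_2 k} \cdots A^{m_s} B^{\epsilon_s k}$ with $\epsilon_i \in \{+1,-1\}$ and $m_i \ne 0$.

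Next I would apply Theorem~\ref{recognizing primitives} to $\alpha$ (or, when $\alpha$ is a proper power $u^r$, to its primitive root $u$). Since $|k| \ge 2$, the CMZ alternative requiring all $B$-exponents to equal $1$ is excluded; in the other alternative the $B$-exponents lie in $\{e,e+1\}$, and if the $\epsilon_i$ were mixed we would need $\{k,-k\} = \{e,e+1\}$, forcing $|k| = \tfrac12$, impossible. So all $\epsilon_i$ agree and $\alpha = (AB^{\pm k})^r$ for some $r \ge 1$ (with $r=1$ in the primitive case and $r \ge 2$ in the proper-power case). Performing the Nielsen move $A \mapsto A' := AB^{\pm k}$, realized by a handle slide of $D_A$ over $D_B$, produces a new complete set of cutting disks and a corresponding new essential separating disk $E'$ for the free splitting $\pi_1(H) = \langle A' \rangle * \langle B \rangle$. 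In this new basis $\alpha = (A')^r$ and $\beta = B^k$ each involve only one of the new generators, so $\alpha$ can be isotoped into the $A'$-side once-punctured torus of the new decomposition while $\beta$ stays in the $B$-side, and $E'$ separates $\alpha$ from $\beta$ in $\partial H$.

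The step I expect to require most care is the last one: after the handle slide, one must realize the new separating disk $E'$ in $H$ so that it is simultaneously disjoint from the given $\alpha$ and $\beta$, equivalently one must check that the purely algebraic identity ``$\alpha = (A')^r$ in $(A',B)$'' forces $|\alpha \cap \partial E'| = 0$ through a genuine ambient isotopy. As for the second alternative of the theorem---that $\alpha$ and $\beta$ bound a nonseparating annulus in $H$---this would require $[\alpha] = [\beta^{\pm 1}]$ in $\pi_1(H)$, and the slope analysis of the sub-case $\alpha \subset T_2$ above then forces the two curves to be parallel in $\partial H$, contradicting the hypothesis; so in our setting this alternative is expected to be vacuous and the ``separated'' conclusion is what the plan actually establishes.
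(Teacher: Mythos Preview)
Your overall strategy---fix a separating disk $E$ missed by $\beta$, read off the cyclic word for $\alpha$ from the arcs of $\alpha\cap T_1$ and $\alpha\cap T_2$, and invoke CMZ---is quite different from the paper's argument, which never fixes $E$ or uses CMZ but instead minimizes $|\alpha\cap(\partial D_A\cup\partial D_B)|$ over all complete cutting systems with $|\beta\cap(\partial D_A\cup\partial D_B)|$ minimal and then uses a Whitehead-graph\,/\,bandsum reduction to force $\alpha$ to meet only one of $\partial D_A,\partial D_B$.

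There is, however, a genuine gap in your argument, and it is exactly where the ``nonseparating annulus'' alternative lives. Your assertion that each arc of $\alpha\cap T_1$ contributes $A^{m_i}$ with $m_i\neq 0$ ``by minimality'' is not correct: an essential arc of $T_1$ with both endpoints on $\partial T_1$ can be disjoint from $\partial D_A$ (it runs once around one of the two boundary circles of the pair of pants $T_1\setminus\partial D_A$), and such an arc contributes $A^0$. These arcs cannot be removed by an isotopy of $\alpha$ in $\partial H$, so minimality with respect to $E$ and $D_A$ does not exclude them. When they occur, your cyclic word collapses to a pure power of $B$, CMZ says nothing useful, and your handle-slide conclusion never gets off the ground.

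This is not a corner case to be dismissed---it is precisely the second alternative of the theorem. A concrete example: take $\beta'\subset T_2$ parallel to $\beta$, choose an arc $\gamma$ from $\beta'$ to $\partial D_A$ disjoint from $\beta$, and let $\alpha$ be the band sum of $\beta'$ and $\partial D_A$ along $\gamma$. Then $\alpha$ is disjoint from $\beta$ and from $\partial D_A$, represents $B^{\pm k}$ in $\pi_1(H)$, but has $[\alpha]=[\beta]\pm[\partial D_A]\neq[\beta]$ in $H_1(\partial H)$, so $\alpha$ and $\beta$ are \emph{not} parallel in $\partial H$. They do bound a nonseparating annulus in $H$ (inside the solid torus $H$ cut along $D_A$). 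Such an $\alpha$ cannot be isotoped into $T_2$ (that would force it parallel to $\beta$) and cannot lie in $T_1$ (it is a power of $B$), so it meets $E$ essentially; your slope argument from the sub-case $\alpha\subset T_2$ simply does not apply to it. Thus your final paragraph, which declares the annulus alternative vacuous, is wrong.

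Even in the case where your CMZ analysis does go through and yields $\alpha=(AB^{\pm k})^r$ algebraically, you correctly flag that passing from this to an actual separating disk $E'$ disjoint from $\alpha$ and $\beta$ needs a geometric argument you have not supplied; the paper avoids this issue entirely by working with cutting disks from the start and letting the bandsum minimization do the geometric work.
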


\begin{proof}
Suppose $\{D_A, D_B\}$ is a complete set of cutting disks of $H$ with the property that $|\beta \cap (\partial D_A \cup \partial D_B)|$ is minimal, and also $|\alpha \cap (\partial D_A \cup \partial D_B)|$ is as small as possible among the complete sets of cutting disks of $H$ minimizing $|\beta \cap (\partial D_A \cup \partial D_B)|$.

Then, since $\beta$ is a proper power in $H$, one of $D_A$, $D_B$, say $D_A$, is disjoint from $\beta$. And then $|\beta \cap \partial D_B| = s$ with $s > 1$. 

\begin{claim}\label{Claim 1}
$\alpha$ intersects only one of $\partial D_A$, $\partial D_B$.
\end{claim}

\begin{proof}[Proof of Claim \emph{\ref{Claim 1}}]
We use the notation of Subsection \ref{H-diagrams and underlying graphs}.
Then, since $\alpha$ is disjoint from a disk in $H$, $G_\alpha$ is either not connected, or it has a cut vertex. And, of course, the graph $G_\beta$ is not connected, since all $s$ of its edges connect $D_B^+$ to $D_B^-$. 

Now the proof of Claim \ref{Claim 1} breaks into two cases depending upon whether there are nonparallel edges in $G_\beta$.

\noindent \textbf{Case 1:} There are edges of $G_\beta$ which are not parallel. \hfill 

In this case, the vertices $D_A^+$ and $D_A^-$ of $G_\beta$ lie in different faces of $G_\beta$. Since $\alpha$ and $\beta$ are disjoint, this implies there are no edges of $G_\alpha$ connecting $D_A^+$ to $D_A^-$ in $G_\alpha$. It follows that, up to exchanging $D_A^+$ and $D_A^-$ or $D_B^+$ and $D_B^-$, $G_\alpha$ has the form of Figure \ref{DisPCFig5a}.

If $a = 0$ in Figure \ref{DisPCFig5a} the claim holds. So suppose $a > 0$ in Figure \ref{DisPCFig5a}. Then the bandsum of $D_A$ and $D_B$, along a subarc of $\alpha$ representing one of the $a$ edges of $G_\alpha$ connecting $D_A^+$ to $D_B^-$, is a disk $D_B'$ of $H$ such that $\{D_A, D_B'\}$ is a complete set of cutting disks of $H$, $|\beta \cap \partial D_B'| = s$, and $|\alpha \cap \partial D_B'| < |\alpha \cap \partial D_B|$, contrary to hypothesis. It follows that $a = 0$ in Figure \ref{DisPCFig5a}, and the claim holds in this case.

\noindent \textbf{Case 2:} Any two edges of $G_\beta$ are parallel. \hfill

In this case, since any two edges of $G_\beta$ are parallel, there is a once-punctured torus $F$ in $\partial H$ which contains $\partial D_B$ and $\beta$. If $\alpha$ and $\partial F$ are disjoint, then $\alpha$ and $\partial D_B$ are disjoint and the claim holds. So suppose the set of connections $F \cap \alpha$ is nonempty, and $\delta$ is a connection in $F \cap \alpha$. Then, since $\alpha$ and $\beta$ are disjoint, $|\delta \cap \partial D_B| = s > 1$. This implies the graph $G_\alpha$ also has the form of Figure \ref{DisPCFig5a} in this case. Then the argument of Case 1 shows the claim also holds here. This finishes the proof of Claim \ref{Claim 1}.
\end{proof}

To finish the proof of Theorem \ref{The case beta is a proper power}, observe that if $\alpha$ only intersects $\partial D_A$, then clearly $\alpha$ and $\beta$ are separated in $H$. On the other hand, if $\alpha$ only intersects $\partial D_B$, then both $\alpha$ and $\beta$ are disjoint from $D_A$, and so $\alpha$ and $\beta$ bound an annulus $\mathcal{A}$ in the solid torus obtained by cutting $H$ open along $D_A$. Finally, $\mathcal{A}$ must be nonseparating in $H$ since $\alpha$ and $\beta$ are not parallel in $\partial H$.
\end{proof}

\begin{figure}[htbp]
\centering
\includegraphics[width = 0.35\textwidth]{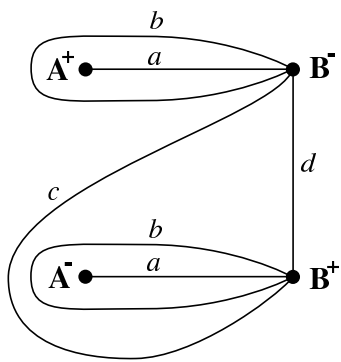}
\caption{The graph $G_\alpha$ of the proof of Theorem \ref{The case beta is a proper power}.}
\label{DisPCFig5a}
\end{figure}

\end{document}